\newtheorem{theorem}{Theorem}[section]
\newtheorem{corollary}[theorem]{Corollary}
\newenvironment{remark}[1][Remark.]{\begin{trivlist}
\item[\hskip \labelsep {\bfseries #1}]}{\end{trivlist}}
\newenvironment{example}[1][Examples.]{\begin{trivlist}
\item[\hskip \labelsep {\bfseries #1}]}{\end{trivlist}}
\title{Spanning trees in directed circulant graphs and cycle power graphs\footnote{The author acknowledges support from the Swiss NSF grant $200021\_132528/1$.}}
\author{Justine Louis}
\date{$10$th July $2015$}
\DeclareMathOperator{\argcosh}{Argcosh}
\DeclareMathOperator{\Arcsin}{Arcsin}
\DeclareMathOperator{\Arctg}{Arctg}
\newcommand*{\eqdef}{=\mathrel{\vcenter{\baselineskip0.5ex \lineskiplimit0pt
                     \hbox{\scriptsize.}\hbox{\scriptsize.}}}}
\begin{document}
        \maketitle
\begin{abstract}
The number of spanning trees in a class of directed circulant graphs with generators depending linearly on the number of vertices $\beta n$, and in the $n$-th and $(n-1)$-th power graphs of the $\beta n$-cycle are evaluated as a product of $\lceil\beta/2\rceil-1$ terms.
\end{abstract}

\section{Introduction}
In this paper we study the number of spanning trees in a class of directed and undirected circulant graphs. Let $1\leqslant\gamma_1\leqslant\cdots\leqslant\gamma_d\leqslant\lfloor n/2\rfloor$ be positive integers. A circulant directed graph, or circulant digraph, on $n$ vertices generated by $\gamma_1,\ldots,\gamma_d$ is the directed graph on $n$ vertices labelled $0,1,\ldots,n-1$ such that for each vertex $v\in\mathbb{Z}/n\mathbb{Z}$ there is an oriented edge connecting $v$ to $v+\gamma_m$ mod $n$ for all $m\in\{1,\ldots,d\}$. We will denote such graphs by $\overrightarrow{C}^{\gamma_1,\ldots,\gamma_d}_n$. Similarly, a circulant graph on $n$ vertices generated by $\gamma_1,\ldots \gamma_d$, denoted by $C^{\gamma_1,\ldots,\gamma_d}_n$, is the undirected graph on $n$ vertices labelled $0,1,\ldots,n-1$ such that each vertex $v\in\mathbb{Z}/n\mathbb{Z}$ is connected to $v\pm\gamma_m$ mod $n$ for all $m\in\{1,\ldots,d\}$. Circulant graphs and digraphs are used as models in network theory. In this context, they 
are called multi-loop networks, or double-loop networks when they are $2$-generated, see for example \cite{MR1846929,MR1973148}. The number of spanning tree measures the reliability of a network.\\
The evaluation of the number of spanning trees in circulant graphs and digraphs has been widely studied, were both exact and asymptotic results have been obtained as the number of vertices grows, see \cite{MR2565193,MR2574828,louis2015asymptotics,louis2015formula,MR2445039} and references therein. In \cite{MR2261780,MR2320194}, the authors showed that the number of spanning trees in such graphs satisfy linear recurrence relations. Yong, Zhang and Golin developped a technique in \cite{MR2445039} to evaluate the number of spanning trees in a particular class of double-loop networks $\overrightarrow{C}^{p,\gamma n+p}_{\beta n}$. In the first section of this work, we derive a closed formula for these graphs, and more generally for $d$-generated circulant digraphs with generators depending linearly on the number of vertices, that is $\overrightarrow{C}^{p,\gamma_1n+p \ldots,\gamma_{d-1}n+p}_{\beta n}$ where $p,\gamma_1,\ldots,\gamma_{d-1},\beta,n$ are positive integers. This partially answers an open question 
posed in \cite{MR2565193} by simplifying the formula given in \cite[Corollary $1$]{MR2565193}.\\ 
In the second section we calculate the number of spanning trees in the $n$-th and $(n-1)$-th power graphs of the $\beta n$-cycle which are the circulant graphs generated by the $n$, respectively $n-1$, first consecutive integers, denoted by $\boldsymbol{C}^n_{\beta n}$ and $\boldsymbol{C}^{n-1}_{\beta n}$ respectively, where $\beta\in\mathbb{N}_{\geqslant2}$. As a consequence, the asymptotic behaviour of it is derived. Cycle power graphs appear, for example, in graph colouring problems, see \cite{MR2587027,MR1720404}.\\
The results obtained here are derived from the matrix tree theorem (see \cite{MR2339282,MR1271140}) which provides a closed formula of a product of $\beta n-1$ terms for a graph on $\beta n$ vertices. Our formulas are a product of $\lceil\beta/2\rceil-1$ terms and are therefore interesting when $n$ is large. In both cases, the symmetry of the graphs is reflected in the formulas which are expressed in terms of eigenvalues of subgraphs of the original graph. This fact was already observed in \cite{louis2015formula}.
\par\vspace{\baselineskip}
\noindent
\textbf{Acknowledgements:} The author thanks Anders Karlsson for reading the manuscript and useful discussions.
\section{Spanning trees in directed circulant graphs}
Let $G$ be a directed graph and $V(G)$ its vertex set. A spanning arborescence converging to $v\in V(G)$ is an oriented subgraph of $G$ such that the out-degree of all vertices except $v$ equals one, and the out-degree of $v$ is zero. We define the combinatorial Laplacian of a directed graph $G$ as an operator acting on the space of functions defined on $V(G)$, by
\begin{equation}
\label{Delta-}
\Delta^-_Gf(x)=\sum_ {y:\ x\rightarrow y}(f(x)-f(y))
\end{equation}
where the sum is over all vertices $y$ such that there is an oriented edge from $x$ to $y$. Equivalently, the combinatorial Laplacian can be defined as a matrix by $\Delta^-_G=D^--A$, where $D^-$ is the out-degree matrix and $A$ is the adjacency matrix such that $(A)_{ij}$ is the number of directed edges from $i$ to $j$. Let $\tau^-(G,v)$ denote the number of arborescences converging to $v$. The Tutte matrix tree theorem (see \cite{MR2339282}) states that for all $v\in V(G)$,
\begin{equation*}
\tau^-(G,v)=\det\Delta^-_{G,v}
\end{equation*}
where $\det\Delta^-_{G,v}$ is the $v$-th cofactor of the Laplacian $\Delta^-_G$ obtained by deleting the row and column of $\Delta^-_G$ corresponding to the vertex $v$. For a regular directed graph $G$, we define the number of spanning trees in $G$, $\tau(G)$, by the sum over all vertices $v\in V(G)$ of the number of arborescences converging to $v$, that is
\begin{equation*}
\tau(G)=\sum_{v\in V(G)}\tau^-(G,v).
\end{equation*}
Notice that we could have defined the number of spanning trees by the sum over all vertices $v\in V(G)$ of the number of spanning arborescences diverging from $v$.\\
By symmetry, all cofactors of the Laplacian of a directed circulant graph are equal and are equal to the product of the non-zero eigenvalues of the Laplacian divided by the number of vertices. Therefore we have that
\begin{equation*}
\tau(G)=\prod_{k=1}^{\lvert V(G)\rvert}\lambda_k
\end{equation*}
where $\lambda_k$, $k=1,\ldots,\lvert V(G)\rvert$, denote the non-zero eigenvalues of the Laplacian of $G$. The non-zero eigenvalues of the Laplacian of the directed circulant graph $\overrightarrow{C}^{\gamma_1,\ldots,\gamma_d}_n$ are given by (see \cite[Proposition $3.5$]{MR1271140})
\begin{equation*}
\lambda_k=d-\sum_{m=1}^de^{2\pi i\gamma_mk/n},\quad k=1,\ldots,n-1.
\end{equation*}
This can also be derived by noticing that the eigenvectors are given by the characters $\chi_k(x)=e^{2\pi ikx/n}$, $k=0,1,\ldots,n-1$, and then applying the Laplacian (\ref{Delta-}) on it.\\
In this section, we establish a formula for the number of spanning trees in directed circulant graphs $\overrightarrow{C}^\Gamma_{\beta n}$ generated by $\Gamma=\{p,\gamma_1n+p,\ldots,\gamma_{d-1}n+p\}$ and in the particular case of two generators $\overrightarrow{C}^{p,\gamma n+p}_{\beta n}$. Figure \ref{directedgraphs} illustrates a $2$ and a $3$ generated directed circulant graphs. We denote by $\mu_k=d-1-\sum_{m=1}^{d-1}e^{2\pi i\gamma_mk/\beta}$, $k=1,\ldots,\beta-1$, the non-zero eigenvalues of the Laplacian on the directed circulant graph $\overrightarrow{C}^{\gamma_1,\ldots,\gamma_{d-1}}_\beta$ and by $\eta_k=2(d-1)-2\sum_{m=1}^{d-1}\cos(2\pi\gamma_mk/\beta)$, $k=1,\ldots,\beta-1$, the non-zero eigenvalues of the Laplacian on the circulant graph $C^{\gamma_1,\ldots,\gamma_{d-1}}_\beta$. Let $A$ be a statement and $\delta_A$ be defined by
\begin{equation*}
\delta_A=\left\{\begin{array}{rl}1&\textnormal{if }A\textnormal{ is satisfied}\\0&\textnormal{otherwise}\end{array}.\right.
\end{equation*}
\begin{figure}[!ht]
\centering
\subfigure[$\protect\overrightarrow{C}^{2,n}_{3n}$ with $n=5$]{\includegraphics[width=5cm]{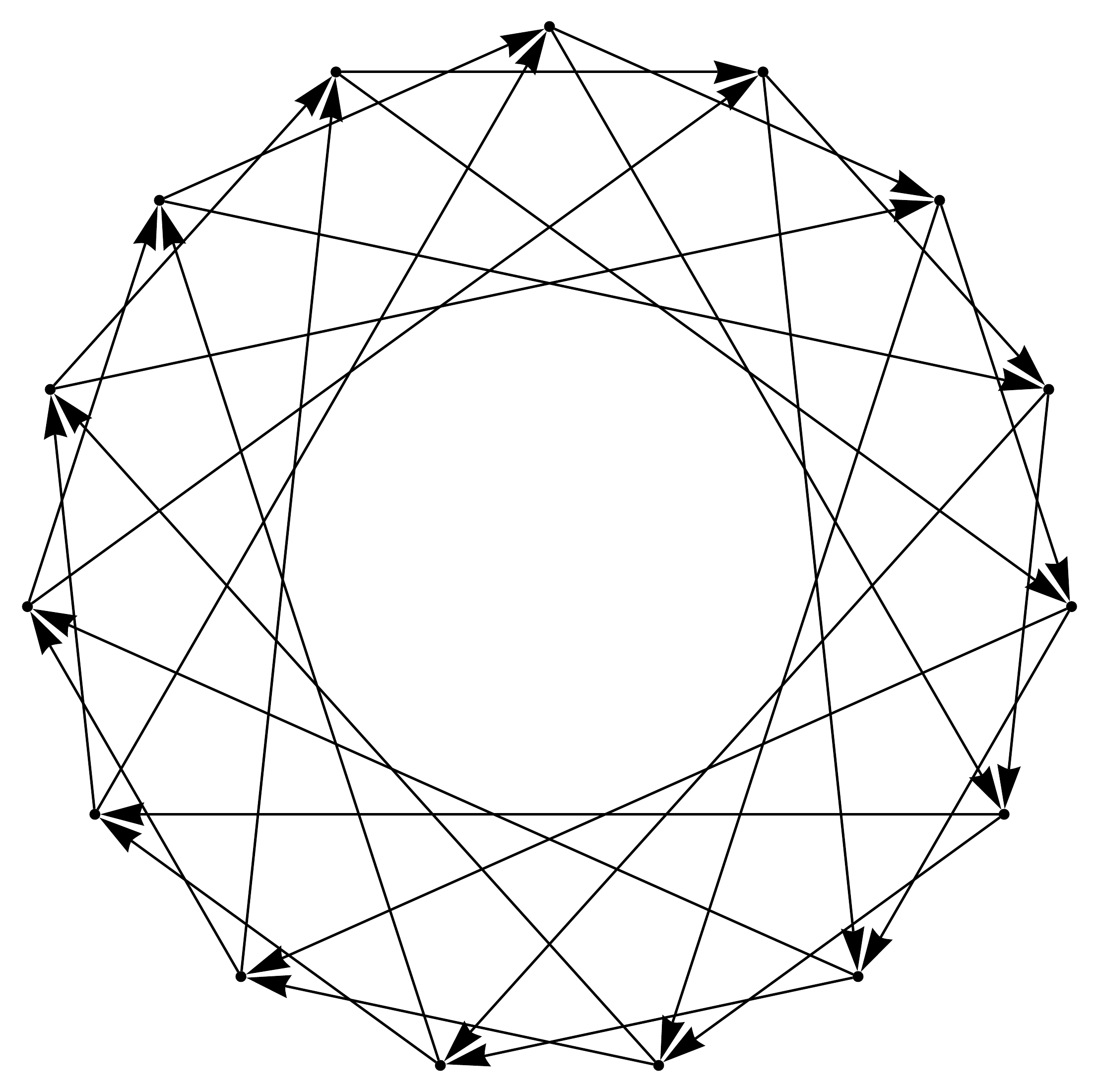}}
\hspace{2cm}
\subfigure[$\protect\overrightarrow{C}^{1,n+1,2n+1}_{4n}$ with $n=5$]{\includegraphics[width=5cm]{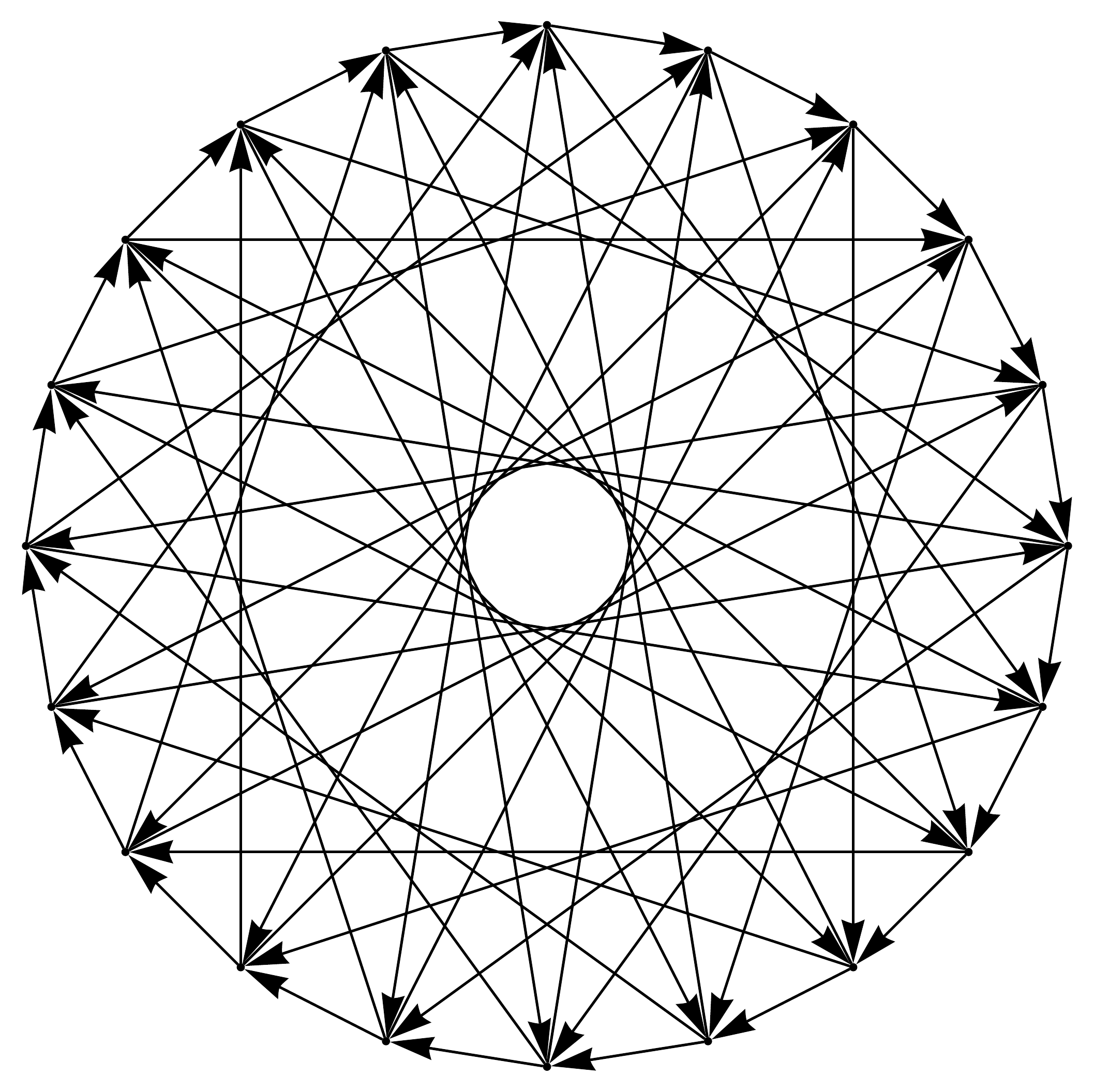}}
\caption{Examples of directed graphs}
\label{directedgraphs}
\end{figure}
\begin{theorem}
\label{dicd}
Let $1\leqslant\gamma_1\leqslant\cdots\leqslant\gamma_{d-1}\leqslant\beta$ and $p$, $n$ be positive integers. For all even $n\in\mathbb{N}_{\geqslant2}$ such that $(p,n)=1$, the number of spanning trees in the directed circulant graph $\overrightarrow{C}^{\Gamma}_{\beta n}$, where $\Gamma=\{p,\gamma_1n+p,\ldots,\gamma_{d-1}n+p\}$, is given by
\begin{align*}
\tau(\overrightarrow{C}^{\Gamma}_{\beta n})&=nd^{\beta n-1}\Big(1-\delta_{\beta\textnormal{ even}}\frac{(-1)^p}{d^n}(1+\sum_{m=1}^{d-1}(-1)^{\gamma_m})^n\Big)\\
&\times\prod_{k=1}^{\lceil\beta/2\rceil-1}\left(1-2\Big|1-\frac{\mu_k}{d}\Big|^n\cos\left(\frac{2\pi pk}{\beta}+n\Arctg\left(\frac{\sum_{m=1}^{d-1}\sin(2\pi\gamma_mk/\beta)}{d-\eta_k/2}\right)\right)+\Big|1-\frac{\mu_k}{d}\Big|^{2n}\right)
\end{align*}
and for odd $n\in\mathbb{N}_{\geqslant1}$,
\begin{align*}
&\tau(\overrightarrow{C}^{\Gamma}_{\beta n})=nd^{\beta n-1}\Big(1-\delta_{\beta\textnormal{ even}}\frac{(-1)^p}{d^n}(1+\sum_{m=1}^{d-1}(-1)^{\gamma_m})^n\Big)\\
&\times\prod_{k=1}^{\lceil\beta/2\rceil-1}\left(1-2\textnormal{sgn}(d-\eta_k/2)\Big|1-\frac{\mu_k}{d}\Big|^n\cos\left(\frac{2\pi pk}{\beta}+n\Arctg\left(\frac{\sum_{m=1}^{d-1}\sin(2\pi\gamma_mk/\beta)}{d-\eta_k/2}\right)\right)+\Big|1-\frac{\mu_k}{d}\Big|^{2n}\right)
\end{align*}
where $\lceil x\rceil$ is the smallest integer greater or equal to $x$, $\lvert.\rvert$ denotes the modulus and we set $\textnormal{sgn}(0)=1$. The number of spanning trees in $\overrightarrow{C}^{\Gamma}_{\beta n}$ is zero if either $(p,n)=1$ and $\beta$, $p$, $\gamma_m$, $m=1,\ldots,d-1$ are all even or either $(p,n)\neq1$.
\end{theorem}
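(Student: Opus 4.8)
The plan is to start from the product over nonzero Laplacian eigenvalues, $\tau(\overrightarrow{C}^\Gamma_{\beta n})=\prod_{k=1}^{\beta n-1}\lambda_k$, recalled above, and first simplify each eigenvalue. Substituting the generators $\Gamma=\{p,\gamma_1 n+p,\ldots,\gamma_{d-1}n+p\}$ into $\lambda_k=d-\sum_m e^{2\pi i\Gamma_m k/(\beta n)}$ and factoring out $e^{2\pi i pk/(\beta n)}$, and writing $\omega=e^{2\pi i/(\beta n)}$, I would obtain
\[
\lambda_k=d-e^{2\pi i pk/(\beta n)}\Big(1+\sum_{m=1}^{d-1}e^{2\pi i\gamma_m k/\beta}\Big)=d-\omega^{pk}(d-\mu_k),
\]
where $\mu_k=d-1-\sum_{m=1}^{d-1}e^{2\pi i\gamma_m k/\beta}$ is exactly the eigenvalue of $\overrightarrow{C}^{\gamma_1,\ldots,\gamma_{d-1}}_\beta$ introduced before the theorem, and which depends only on $k$ modulo $\beta$.

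The key device is to group the $\beta n-1$ factors by the residue $r=k\bmod\beta$. For fixed $r$ the values $k=r+\beta t$, $t=0,\ldots,n-1$, give $\omega^{pk}=e^{2\pi i pr/\beta}\big(e^{2\pi i p/n}\big)^{t}$ while $d-\mu_k=d-\mu_r$ is constant. Under the hypothesis $(p,n)=1$ the powers $\big(e^{2\pi i p/n}\big)^{t}$ run over all $n$-th roots of unity, so the partial product telescopes via $\prod_{s=0}^{n-1}(X-\zeta^s Y)=X^n-Y^n$ to $d^n-e^{2\pi i pr/\beta}(d-\mu_r)^n$ for $r\neq0$. For $r=0$ one drops the excluded index $k=0$; since $\mu_0=0$ the remaining factors give $d^{n-1}\prod_{t=1}^{n-1}(1-e^{2\pi i pt/n})=nd^{n-1}$, which after factoring $d^n$ out of the $\beta-1$ remaining blocks produces the leading constant $nd^{\beta n-1}$.

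Next I would exploit the reality structure. Because $\mu_{\beta-r}=\overline{\mu_r}$ and $e^{2\pi i p(\beta-r)/\beta}=\overline{e^{2\pi i pr/\beta}}$, the block at $\beta-r$ is the complex conjugate of the block at $r$, so their product equals $|1-e^{2\pi i pr/\beta}(1-\mu_r/d)^n|^2$. Writing $1-\mu_r/d$ in polar form with modulus $|1-\mu_r/d|$ and argument $\theta_r$, and using $\mathrm{Re}(d-\mu_r)=d-\eta_r/2$ together with $\mathrm{Im}(d-\mu_r)=\sum_m\sin(2\pi\gamma_m r/\beta)$, this modulus squared becomes the quoted factor $1-2|1-\mu_r/d|^n\cos(2\pi pr/\beta+n\theta_r)+|1-\mu_r/d|^{2n}$. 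Pairing $r$ with $\beta-r$ accounts for exactly $\lceil\beta/2\rceil-1$ blocks; when $\beta$ is even the self-paired index $r=\beta/2$ is left over, and there $e^{2\pi i p(\beta/2)/\beta}=(-1)^p$ and $d-\mu_{\beta/2}=1+\sum_m(-1)^{\gamma_m}$ produce precisely the $\delta_{\beta\,\mathrm{even}}$ prefactor.

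I expect the main obstacle to be the passage from $\theta_r=\arg(d-\mu_r)$ to the principal branch $\Arctg\big(\sum_m\sin(2\pi\gamma_m r/\beta)/(d-\eta_r/2)\big)$: since $\Arctg$ only returns values in $(-\pi/2,\pi/2)$ it equals $\theta_r$ only when $d-\eta_r/2>0$, whereas for $d-\eta_r/2<0$ one has $\theta_r=\Arctg(\cdots)\pm\pi$, so that $\cos(2\pi pr/\beta+n\theta_r)=(-1)^n\cos(2\pi pr/\beta+n\Arctg(\cdots))$. This is the source of the two cases: for even $n$ the factor $(-1)^n$ is trivial, while for odd $n$ it flips the sign of the cosine term, recorded by $\mathrm{sgn}(d-\eta_k/2)$ (the convention $\mathrm{sgn}(0)=1$ handling the purely imaginary case $d-\eta_r/2=0$). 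Finally, for the vanishing statements I would exhibit an explicit zero eigenvalue. If $(p,n)=g>1$, then $k=\beta n/g$ is an admissible index, $g\mid n$ and $g\mid p$ give $\omega^{pk}=e^{2\pi i p/g}=1$ and $e^{2\pi i\gamma_m k/\beta}=e^{2\pi i\gamma_m n/g}=1$, hence $\lambda_k=d-d=0$. If instead $\beta,p,\gamma_m$ are all even with $(p,n)=1$, then $k=\beta n/2$ gives $\omega^{pk}=(-1)^p=1$ and $d-\mu_k=1+\sum_m(-1)^{\gamma_m}=d$, so again $\lambda_k=0$; in either case the whole product vanishes.
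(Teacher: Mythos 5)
Your proposal is correct and follows essentially the same route as the paper's proof: expand the eigenvalue product from the matrix tree theorem, split it by the residues $k\bmod\beta$, collapse each block with the roots-of-unity identity (valid precisely because $(p,n)=1$), pair $r$ with $\beta-r$ (the self-paired $r=\beta/2$ yielding the $\delta_{\beta\textnormal{ even}}$ prefactor), and resolve the $\Arctg$ branch ambiguity, which is exactly the paper's source of the even/odd $n$ dichotomy; your handling of the degenerate cases by exhibiting an explicit zero eigenvalue is a minor, self-contained variant of the paper's argument (zero factor for $(p,n)\neq1$, disconnectedness in the all-even case). One typographical slip: for $k=r+\beta t$ one has $\omega^{pk}=e^{2\pi ipr/(\beta n)}\big(e^{2\pi ip/n}\big)^{t}$, not $e^{2\pi ipr/\beta}\big(e^{2\pi ip/n}\big)^{t}$, but your telescoped block $d^{n}-e^{2\pi ipr/\beta}(d-\mu_r)^{n}$ is the one that follows from the corrected expression, so nothing downstream is affected.
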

\begin{proof}
From the Tutte matrix tree theorem, the number of spanning trees in $\overrightarrow{C}^\Gamma_{\beta n}$ is given by
\begin{equation*}
\tau(\overrightarrow{C}^{\Gamma}_{\beta n})=\prod_{k=1}^{\beta n-1}(d-e^{2\pi ipk/(\beta n)}-\sum_{m=1}^{d-1}e^{2\pi i(\gamma_mn+p)k/(\beta n)}).
\end{equation*}
By splitting the product over $k=1,\ldots,\beta n-1$ into two products, when $k$ is a multiple of $\beta$, that is $k=l\beta$ with $l=1,\ldots,n-1$, and over non-multiples of $\beta$, that is, $k=k'+l'\beta$ with $k'=1,\ldots,\beta-1$ and $l'=0,1,\ldots,n-1$, we have
\begin{equation}
\label{tau}
\tau(\overrightarrow{C}^{\Gamma}_{\beta n})=\prod_{l=1}^{n-1}(d-de^{2\pi ipl/n})\prod_{k=1}^{\beta-1}\prod_{l'=0}^{n-1}(d-(1+\sum_{m=1}^{d-1}e^{2\pi i\gamma_mk/\beta})e^{2\pi ipk/(\beta n)}e^{2\pi ipl'/n}).
\end{equation}
We have that
\begin{equation*}
\prod_{l=1}^{n-1}(d-de^{2\pi ipl/n})=d^{n-1}\prod_{l=1}^{n-1}(1-e^{2\pi ipl/n})=nd^{n-1}\delta_{(p,n)=1}.
\end{equation*}
This equality comes from the fact that $\prod_{l=1}^{n-1}(1-e^{2\pi ipl/n})$ is the number of spanning trees of the directed graph $\overrightarrow{C}^p_n$, which is isomorphic to the directed cycle on $n$ vertices if $(p,n)=1$, and is not connected if $(p,n)\neq1$. Therefore the product is equal to $n\delta_{(p,n)=1}$.\\
Hence, if $(p,n)\neq1$, we have
\begin{equation*}
\tau(\overrightarrow{C}^{\Gamma}_{\beta n})=0.
\end{equation*}
Let $p$ be relatively prime to $n$. Using that the complex numbers $e^{2\pi il/n}$, $l=0,1,\ldots,n-1$, are the $n$ non-trivial roots of unity, we have for all $x$,
\begin{equation}
\label{unityroots}
\prod_{l=0}^{n-1}(x-e^{2\pi ilp/n})=x^n-1.
\end{equation}
since $(p,n)=1$. Equivalently we have,
\begin{equation*}
\prod_{l=0}^{n-1}(1-xe^{2\pi ilp/n})=1-x^n.
\end{equation*}
Using this identity in (\ref{tau}) enables to evaluate the product over $l'$, it comes
\begin{equation}
\label{betaproduct}
\tau(\overrightarrow{C}^{\Gamma}_{\beta n})=nd^{\beta n-1}\prod_{k=1}^{\beta-1}(1-\frac{1}{d^n}(1+\sum_{m=1}^{d-1}e^{2\pi i\gamma_mk/\beta})^ne^{2\pi ipk/\beta}).
\end{equation}
For odd $\beta$ we write the product over $k$, $k=1,\ldots,\beta-1$, as a product from $1$ to $(\beta-1)/2$, and for even $\beta$ we write it as a product from $1$ to $\beta/2-1$ and add the $k=\beta/2$ factor which is given by $1-(-1)^p(1+\sum_{m=1}^{d-1}(-1)^{\gamma_m})^n/d^n$. Writing the above expression in terms of \linebreak$\mu_k=d-1-\sum_{m=1}^{d-1}e^{2\pi i\gamma_mk/\beta}$, it comes
\begin{align}
\tau(\overrightarrow{C}^{\Gamma}_{\beta n})&=nd^{\beta n-1}\Big(1-\delta_{\beta\textnormal{ even}}\frac{(-1)^p}{d^n}(1+\sum_{m=1}^{d-1}(-1)^{\gamma_m})^n\Big)\nonumber\\
&\times\prod_{k=1}^{\lceil\beta/2\rceil-1}(1-(1-\mu_k/d)^ne^{2\pi ipk/\beta})(1-(1-\mu_k^\ast/d)^ne^{-2\pi ipk/\beta})\nonumber\\
&=nd^{\beta n-1}\Big(1-\delta_{\beta\textnormal{ even}}\frac{(-1)^p}{d^n}(1+\sum_{m=1}^{d-1}(-1)^{\gamma_m})^n\Big)\nonumber\\
&\times\prod_{k=1}^{\lceil\beta/2\rceil-1}(1-2\lvert1-\mu_k/d\rvert^n\cos(2\pi pk/\beta+n\phi_k)+\lvert1-\mu_k/d\rvert^{2n})
\label{tau2}
\end{align}
where $\phi_k$ is the phase of the complex number $1-\mu_k/d$ such that $1-\mu_k/d=\lvert1-\mu_k/d\rvert e^{i\phi_k}$. We have
\begin{equation*}
\lvert1-\mu_k/d\rvert=\frac{1}{d}\Big((d-\eta_k/2)^2+\Big(\sum_{m=1}^{d-1}\sin(2\pi\gamma_mk/\beta)\Big)^2\Big)^{1/2}
\end{equation*}
and
\begin{equation*}
\cos{\phi_k}=\frac{d-\eta_k/2}{\lvert d-\mu_k\rvert},\quad\sin{\phi_k}=\frac{\sum_{m=1}^{d-1}\sin(2\pi\gamma_mk/\beta)}{\lvert d-\mu_k\rvert}.
\end{equation*}
Therefore for $k$ such that $d-\eta_k/2\neq0$, the phase is given by
\begin{equation}
\label{phik}
\phi_k=\Arctg\left(\frac{\sum_{m=1}^{d-1}\sin(2\pi\gamma_mk/\beta)}{d-\eta_k/2}\right)+\epsilon\pi
\end{equation}
where $\epsilon=0$ if $\textnormal{sgn}(d-\eta_k/2)=1$ and $\epsilon\in\{-1,1\}$ if $\textnormal{sgn}(d-\eta_k/2)=-1$. For $k$ such that $d-\eta_k/2=0$, we take the limit as $d-\eta_k/2\rightarrow0$ in (\ref{phik}), with $\epsilon=0$. The theorem follows by putting equation (\ref{phik}) into equation (\ref{tau2}).\\
When $\beta$, $p$ and $\gamma_m$, $m=1,\ldots,d-1$ are all even, the directed circulant graph $\overrightarrow{C}^{\Gamma}_{\beta n}$ is not connected and therefore the number of spanning trees is zero, this is reflected in the formula.
\end{proof}
In the following theorem we state the particular case on two-generated directed circulant graphs.
\begin{theorem}
\label{d=2}
Let $1\leqslant\gamma\leqslant\beta$ and $p$, $n$ be positive integers. For odd $\beta$ and all $n\in\mathbb{N}_{\geqslant1}$ such that $(p,n)=1$, the number of spanning trees in the directed circulant graph $\overrightarrow{C}^{p,\gamma n+p}_{\beta n}$ is given by
\begin{equation*}
\tau(\overrightarrow{C}^{p,\gamma n+p}_{\beta n})=n2^{\beta n-1}\prod_{k=1}^{(\beta-1)/2}\Big(1-2\cos(2\pi(p+\gamma n/2)k/\beta)\cos^n(\pi\gamma k/\beta)+\cos^{2n}(\pi\gamma k/\beta)\Big)
\end{equation*}
and for even $\beta$, if $\gamma$ or $p$ is odd, then
\begin{equation*}
\tau(\overrightarrow{C}^{p,\gamma n+p}_{\beta n})=n2^{\beta n-1+\delta_{\gamma\textnormal{ even}}}\prod_{k=1}^{\beta/2-1}\Big(1-2\cos(2\pi(p+\gamma n/2)k/\beta)\cos^n(\pi\gamma k/\beta)+\cos^{2n}(\pi\gamma k/\beta)\Big).
\end{equation*}
The number of spanning trees in $\overrightarrow{C}^{p,\gamma n+p}_{\beta n}$ is zero if either $(p,n)=1$ and $\beta$, $p$ and $\gamma$ are all even or either $(p,n)\neq1$.
\end{theorem}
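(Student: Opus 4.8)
The plan is to specialize the derivation of Theorem~\ref{dicd} to the two-generator case $d=2$, writing $\gamma=\gamma_1$, and to carry out the algebraic simplification that collapses the product term into the stated $\cos^n$ form. The most economical starting point is the intermediate identity (\ref{tau2}) (equivalently (\ref{betaproduct})), because there the conjugate-pair structure over $k=1,\dots,\lceil\beta/2\rceil-1$ is already in place. For $d=2$ one has $\mu_k=1-e^{2\pi i\gamma k/\beta}$, hence $1-\mu_k/2=\tfrac12(1+e^{2\pi i\gamma k/\beta})$. The key simplification is the half-angle identity $1+e^{i\theta}=2\cos(\theta/2)e^{i\theta/2}$, which yields $1-\mu_k/2=\cos(\pi\gamma k/\beta)\,e^{i\pi\gamma k/\beta}$ and therefore $(1-\mu_k/2)^n=\cos^n(\pi\gamma k/\beta)\,e^{in\pi\gamma k/\beta}$. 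I would stress that this form carries the sign of the cosine inside the $n$-th power automatically, which is exactly why neither an absolute value nor a separate sign factor survives in the final formula.

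Substituting this into the $k$-th factor of (\ref{tau2}) and using $n\pi\gamma k/\beta+2\pi pk/\beta=2\pi(p+n\gamma/2)k/\beta$, the first bracket becomes $1-\cos^n(\pi\gamma k/\beta)\,e^{i2\pi(p+n\gamma/2)k/\beta}$, while the second bracket, built from $\mu_k^\ast$, is its complex conjugate (since $1-\mu_k^\ast/2=\cos(\pi\gamma k/\beta)e^{-i\pi\gamma k/\beta}$). Multiplying the conjugate pair gives exactly
\[
1-2\cos(2\pi(p+\gamma n/2)k/\beta)\cos^n(\pi\gamma k/\beta)+\cos^{2n}(\pi\gamma k/\beta),
\]
which is the factor in the statement. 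As the product in (\ref{tau2}) runs to $\lceil\beta/2\rceil-1$, this already produces the range $k=1,\dots,(\beta-1)/2$ for odd $\beta$ and $k=1,\dots,\beta/2-1$ for even $\beta$.

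It then remains to read off the prefactor. For odd $\beta$ the indicator $\delta_{\beta\text{ even}}$ vanishes and we are left with $n2^{\beta n-1}$, giving the first formula. For even $\beta$ the prefactor of (\ref{tau2}) is $n2^{\beta n-1}\big(1-(-1)^p(1+(-1)^\gamma)^n/2^n\big)$, and here I would split on the parity of $\gamma$. If $\gamma$ is odd then $1+(-1)^\gamma=0$ and the bracket is $1$, matching $n2^{\beta n-1}$ (so $\delta_{\gamma\text{ even}}=0$). If $\gamma$ is even then $(1+(-1)^\gamma)^n=2^n$ and the bracket equals $1-(-1)^p$, which is $2$ when $p$ is odd, yielding $n2^{\beta n}=n2^{\beta n-1+\delta_{\gamma\text{ even}}}$, and $0$ when $p$ is even, reproducing the asserted vanishing when $\beta,p,\gamma$ are all even. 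The case $(p,n)\neq1$ gives $\tau=0$ directly from the first product in (\ref{tau}), exactly as in the proof of Theorem~\ref{dicd}.

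The computation is essentially bookkeeping, and the one point deserving care — the main, if mild, obstacle — is the sign reconciliation with Theorem~\ref{dicd}. There the even-$n$ and odd-$n$ formulas differ by a factor $\textnormal{sgn}(d-\eta_k/2)$; here $d-\eta_k/2=1+\cos(2\pi\gamma k/\beta)=2\cos^2(\pi\gamma k/\beta)\geqslant0$, so $\textnormal{sgn}(d-\eta_k/2)=1$ for every $k$ and the two cases collapse into the single formula above. I would confirm consistency by checking that $\Arctg\big(\sin(2\pi\gamma k/\beta)/(1+\cos(2\pi\gamma k/\beta))\big)=\Arctg\big(\tan(\pi\gamma k/\beta)\big)$ together with $\lvert1-\mu_k/2\rvert=\lvert\cos(\pi\gamma k/\beta)\rvert$ reproduces the same quadratic factor, so that specializing either the intermediate form (\ref{tau2}) or the final statement of Theorem~\ref{dicd} yields the identical result.
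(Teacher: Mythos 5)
Your proposal is correct and follows essentially the same route as the paper: it specializes the intermediate product (\ref{betaproduct})/(\ref{tau2}) to $d=2$, uses the half-angle factorization $1+e^{i\theta}=2\cos(\theta/2)e^{i\theta/2}$ to collapse each factor to $1-\cos^n(\pi\gamma k/\beta)e^{2\pi i(p+\gamma n/2)k/\beta}$, multiplies conjugate pairs, and settles the even-$\beta$ prefactor by the same parity analysis of the $k=\beta/2$ factor. The only (immaterial) difference is the order of operations — the paper simplifies first and then pairs $k$ with $\beta-k$, while you start from the already-paired form (\ref{tau2}) — and your closing remark about $\textnormal{sgn}(d-\eta_k/2)=1$ is a sound consistency check rather than a needed step.
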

\begin{proof}
From equation (\ref{betaproduct}) it follows
\begin{equation*}
\tau(\overrightarrow{C}^{p,\gamma n+p}_{\beta n})=n2^{\beta n-1}\prod_{k=1}^{\beta-1}(1-e^{2\pi i(p+\gamma n/2)k/\beta}\cos^n(\pi\gamma k/\beta)).
\end{equation*}
For odd $\beta$, we have
\begin{align*}
\tau(\overrightarrow{C}^{p,\gamma n+p}_{\beta n})&=n2^{\beta n-1}\prod_{k=1}^{(\beta-1)/2}(1-e^{2\pi i(p+\gamma n/2)k/\beta}\cos^n(\pi\gamma k/\beta))\\
&\qquad\qquad\quad\qquad\times(1-e^{-2\pi i(p+\gamma n/2)k/\beta}\cos^n(\pi\gamma k/\beta))\\
&=n2^{\beta n-1}\prod_{k=1}^{(\beta-1)/2}(1-2\cos(2\pi(p+\gamma n/2)k/\beta)\cos^n(\pi\gamma k/\beta)+\cos^{2n}(\pi\gamma k/\beta)).
\end{align*}
For even $\beta$, the factor $k=\beta/2$ is added:
\begin{equation*}
1-e^{\pi i(p+\gamma n/2)}\cos^n(\pi\gamma/2)=\left\{\begin{array}{rl}0&\textnormal{if }p\textnormal{ and }\gamma\textnormal{ are even}\\1&\textnormal{if }\gamma\textnormal{ is odd}\\2&\textnormal{otherwise}\end{array}.\right.
\end{equation*}
For even $\beta$, $p$ and $\gamma$, the graph $\overrightarrow{C}^{p,\gamma n+p}_{\beta n}$ is not connected and therefore the number of spanning trees is zero. Therefore if $p$ or $\gamma$ is odd, we have
\begin{align*}
\tau(\overrightarrow{C}^{p,\gamma n+p}_{\beta n})&=n2^{\beta n-1+\delta_{\gamma\textnormal{ even}}}\prod_{k=1}^{\beta/2-1}(1-e^{2\pi i(p+\gamma n/2)k/\beta}\cos^n(\pi\gamma k/\beta))\\
&\quad\qquad\qquad\qquad\qquad\times(1-e^{-2\pi i(p+\gamma n/2)k/\beta}\cos^n(\pi\gamma k/\beta))\\
&=n2^{\beta n-1+\delta_{\gamma\textnormal{ even}}}\prod_{k=1}^{\beta/2-1}(1-2\cos(2\pi(p+\gamma n/2)k/\beta)\cos^n(\pi\gamma k/\beta)+\cos^{2n}(\pi\gamma k/\beta)).
\end{align*}
\end{proof}
\begin{example}
Consider the case when $p=\beta=3$ and $\gamma=2$. It follows from Theorem \ref{d=2} that $\tau(\overrightarrow{C}^{3,2n+3}_{3n})=0$ if $n$ is a multiple of $3$, otherwise,
\begin{align*}
\tau(\overrightarrow{C}^{3,2n+3}_{3n})&=n2^{3n-1}(1-2\cos(2\pi n/3)\cos^n(2\pi/3)+\cos^{2n}(2\pi/3))\\
&=n(2^{3n-1}-2^{2n}\cos(\pi n/3)+2^{n-1})
\end{align*}
as stated in \cite[Example $4$.(iii)]{MR2445039}. As another example, consider the case when $p=2$, $\gamma=5$ and $\beta=6$. From Theorem \ref{d=2}, for even $n$, $\tau(\overrightarrow{C}^{2,5n+2}_{6n})=0$, and for odd $n$,
\begin{align*}
\tau(\overrightarrow{C}^{2,5n+2}_{6n})&=n2^{6n-1}(1-2\cos(2\pi(2+5n/2)/6)\cos^n(5\pi/6)+\cos^{2n}(5\pi/6))\\
&\quad\times(1-2\cos(4\pi(2+5n/2)/6)\cos^n(10\pi/6)+\cos^{2n}(10\pi/6))\\
&=\frac{n}{2}(2^{3n}+2^{2n}3^{n/2}\cos(\pi n/6)-2^{2n}3^{(n+1)/2}\sin(\pi n/6)+6^n)\\
&\quad\times(2^{3n}-2^{2n-1}3^{n/2}\cos(\pi n/3)+2^{n-1}3^{(n+1)/2}\sin(\pi n/3)+2^n).
\end{align*}
\end{example}
\section{Spanning trees in cycle power graphs}
The $k$-th power graph of the $n$-cycle, denoted by $\boldsymbol{C}^k_n$, is the graph with the same vertex set as the $n$-cycle where two vertices are connected if their distance on the $n$-cycle is at most $k$. It is therefore the circulant graph on $n$ vertices generated by the first $k$ consecutive integers. In this section, we derive a formula for the number of spanning trees in the $n$-th and $(n-1)$-th power graph of the $\beta n$-cycle, where $\beta\in\mathbb{N}_{\geqslant2}$. As a consequence we derive the asymptotic behaviour of it as $n$ goes to infinity.\\
The combinatorial Laplacian of an undirected graph $G$ with vertex set $V(G)$ defined as an operator acting on the space of functions is
\begin{equation*}
\Delta_Gf(x)=\sum_{y\sim x}(f(x)-f(y))
\end{equation*}
where the sum is over all vertices adjacent to $x$. The matrix tree theorem \cite{MR1271140} states that the number of spanning trees in $G$, $\tau(G)$, is given by
\begin{equation*}
\tau(G)=\frac{\prod_{k=1}^{\lvert V(G)\rvert-1}\lambda_k}{\lvert V(G)\rvert}
\end{equation*}
where $\lambda_k$, $k=1,\ldots,\lvert V(G)\rvert-1$, are the non-zero eigenvalues of $\Delta_G$. The eigenvectors of the Laplacian on the circulant graph $C^{1,\ldots,n}_{\beta n}$ are given by the characters $\chi_k(x)=e^{2\pi ikx/(\beta n)}$, $k=0,1,\ldots,\beta n-1$. Therefore the non-zero eigenvalues are given by
\begin{equation*}
\lambda_k=2n-2\sum_{m=1}^n\cos(2\pi km/(\beta n)),\quad k=1,\ldots,\beta n-1.
\end{equation*}
Similarly, the non-zero eigenvalues on $C^{1,\ldots,n-1}_{\beta n}$ are given by
\begin{equation*}
\lambda_k=2(n-1)-2\sum_{m=1}^{n-1}\cos(2\pi km/(\beta n)),\quad k=1,\ldots,\beta n-1.
\end{equation*}
Figure \ref{powergraphs} below illustrates two power graphs of the $24$-cycle.
\begin{figure}[H]
\centering
\subfigure[$\boldsymbol{C}^8_{24}$]{\includegraphics[width=5cm]{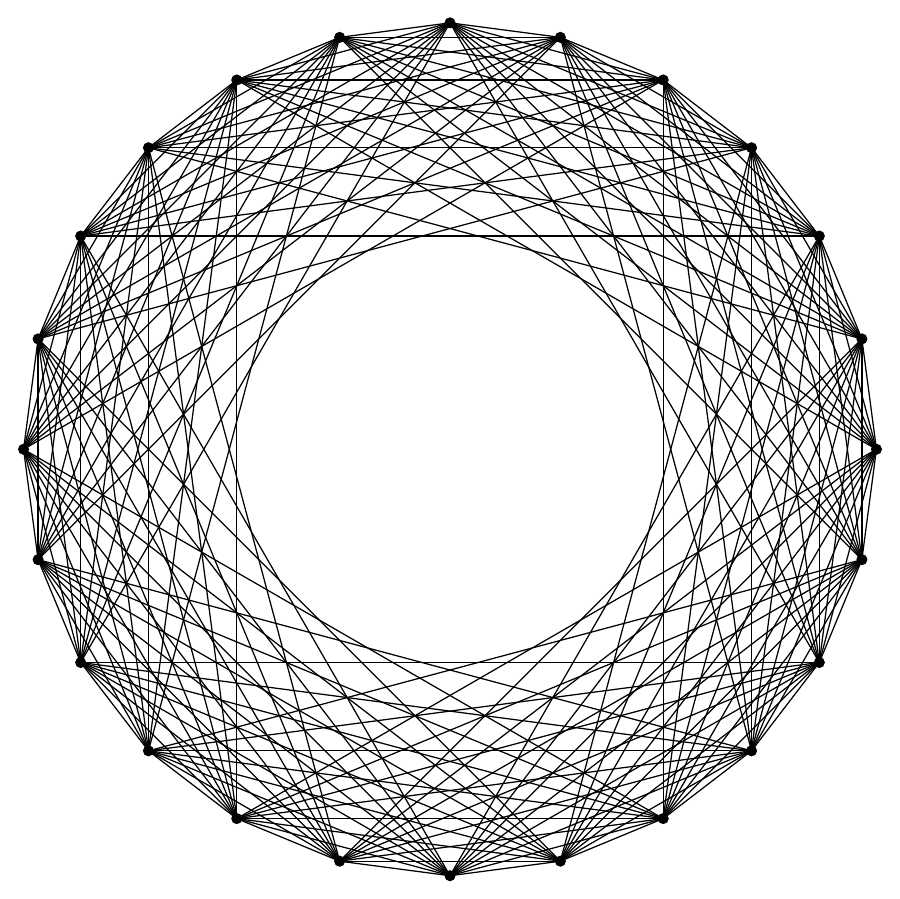}}
\hspace{2cm}
\subfigure[$\boldsymbol{C}^7_{24}$]{\includegraphics[width=5cm]{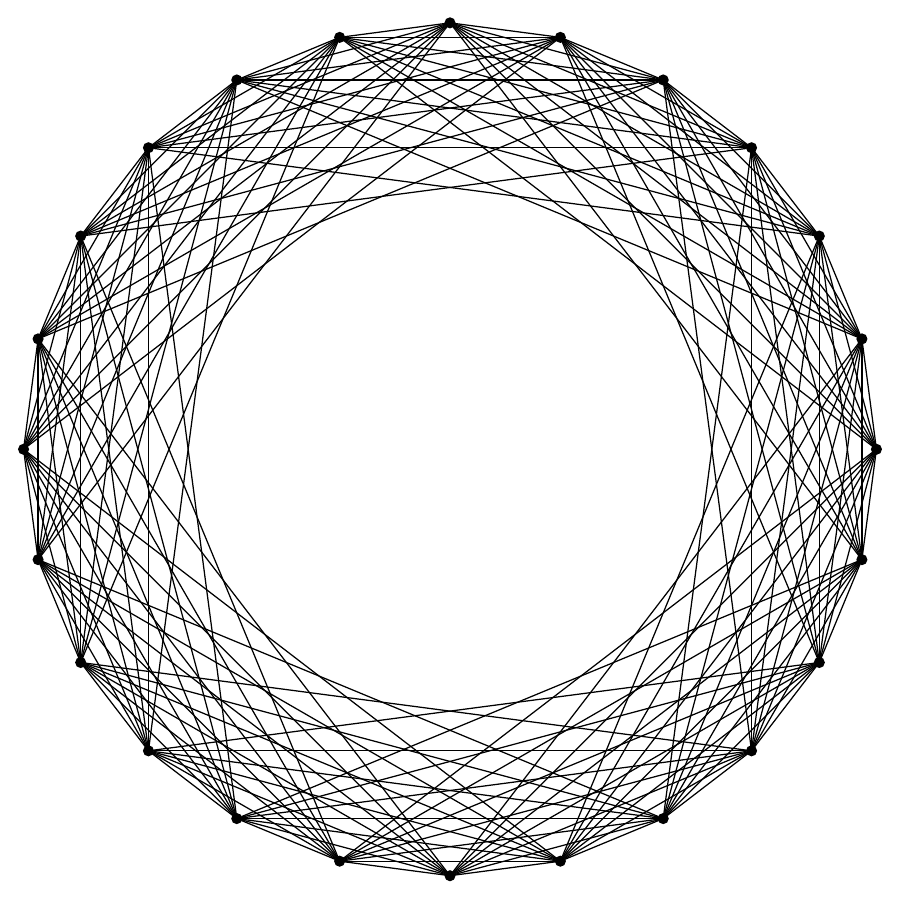}}
\caption{$8$-th and $7$-th power graphs of the $24$-cycle}
\label{powergraphs}
\end{figure}
\begin{theorem}
\label{circ}
Let $\beta\geqslant2$ be an integer and $\mu_k=2-2\cos(2\pi k/\beta)$, $k=1,\ldots,\beta-1$, be the non-zero eigenvalues of the Laplacian on the $\beta$-cycle. The number of spanning trees in the $n$-th power graph of the $\beta n$-cycle $\boldsymbol{C}^n_{\beta n}$ for $\beta\geqslant3$, is given by
\begin{align*}
\tau(\boldsymbol{C}^n_{\beta n})&=\frac{2^{\beta(n+1)}}{(2\beta)^2}n^{\beta n-2}\left(1+\frac{1}{2n}\right)^{\beta n}(1-(2n+1)^{-\beta})^n\\
&\times\prod_{k=1}^{\lceil\beta/2\rceil-1}\sin^2\left(\frac{\pi(n+1)k}{\beta}-n\Arcsin\left(\frac{n+1}{\sqrt{4n^2/\mu_k+2n+1}}\right)\right)
\end{align*}
where $\lceil x\rceil$ denotes the smallest integer greater or equal to $x$. For $\beta=2$, it is given by
\begin{equation*}
\tau(\boldsymbol{C}^n_{2n})=(2n)^{2n-2}(1+1/n)^n.
\end{equation*}
The number of spanning trees in the $(n-1)$-th power graph of the $\beta n$-cycle $\boldsymbol{C}^{n-1}_{\beta n}$, for $\beta\geqslant3$, is given by
\begin{align*}
\tau(\boldsymbol{C}^{n-1}_{\beta n})&=\frac{2^{\beta(n+1)}}{(2\beta)^2}n^{\beta n-2}\left(1-\frac{1}{2n}\right)^{\beta n}\lvert(-1)^\beta-(2n-1)^{-\beta}\rvert^n\\
&\times\prod_{k=1}^{\lceil\beta/2\rceil-1}\sin^2\left(\frac{\pi(n-1)k}{\beta}-n\Arcsin\left(\frac{n-1}{\sqrt{4n^2/\mu_k-(2n-1)}}\right)\right).
\end{align*}
For $\beta=2$, it is given by
\begin{equation*}
\tau(\boldsymbol{C}^{n-1}_{2n})=(2n)^{2n-2}(1-1/n)^n.
\end{equation*}
\end{theorem}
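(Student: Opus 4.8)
The plan is to imitate the proof of Theorem~\ref{dicd}: start from the matrix tree theorem, which gives $\tau(\boldsymbol{C}^n_{\beta n})=\frac{1}{\beta n}\prod_{k=1}^{\beta n-1}\lambda_k$ with $\lambda_k=2n-2\sum_{m=1}^n\cos(2\pi km/(\beta n))$, then close the inner sum, split the outer product by divisibility of $k$ by $\beta$, and collapse the remaining product by roots of unity. First I would evaluate the cosine sum with the Dirichlet-kernel identity $\sum_{m=1}^n\cos(m\theta)=\sin(n\theta/2)\cos((n+1)\theta/2)/\sin(\theta/2)$ and a product-to-sum step, obtaining the compact form
\[
\lambda_k=(2n+1)-\frac{\sin((2n+1)\pi k/(\beta n))}{\sin(\pi k/(\beta n))}.
\]
Splitting $k=1,\dots,\beta n-1$ into multiples $k=l\beta$ and non-multiples $k=k'+l'\beta$ with $k'=1,\dots,\beta-1$, $l'=0,\dots,n-1$, the multiples give $\lambda_{l\beta}=2n$ (since $\sum_{m=1}^n\cos(2\pi lm/n)=0$), hence a factor $(2n)^{n-1}$.

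For the non-multiples I would set $a=e^{i\pi k'/(\beta n)}$ and $\zeta=e^{i\pi l'/n}$, so that $e^{i\pi k/(\beta n)}=a\zeta$; as $\zeta$ is a $2n$-th root of unity, $\zeta^{2n+1}=\zeta$ and $\xi:=\zeta^2=e^{2\pi il'/n}$ ranges over all $n$-th roots of unity. A short manipulation turns $\lambda_{k'+l'\beta}$ into a M\"obius-type expression $\frac{[(2n+1)a-a^{2n+1}]\xi-[\cdots]}{a\xi-a^{-1}}$, and applying $\prod_{\xi^n=1}(A\xi-B)=(-1)^n(B^n-A^n)$ to numerator and denominator collapses the product over $l'$ to
\[
\prod_{l'=0}^{n-1}\lambda_{k'+l'\beta}=\frac{\mathrm{Im}(w^n)}{\sin(\pi k'/\beta)},\qquad w=a\,[(2n+1)-e^{2\pi ik'/\beta}].
\]
Writing $(2n+1)-e^{2\pi ik'/\beta}=R_{k'}e^{-i\psi_{k'}}$ gives $R_{k'}^2=4n^2+(2n+1)\mu_{k'}$ and the factor $R_{k'}^n\sin(\pi k'/\beta-n\psi_{k'})/\sin(\pi k'/\beta)$. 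Pairing $k'$ with $\beta-k'$ (for which $R_{k'}$ and $\sin(\pi k'/\beta)$ are unchanged while $\psi_{\beta-k'}=-\psi_{k'}$, by conjugation) sends each pair to a square, reducing the range to $k'=1,\dots,\lceil\beta/2\rceil-1$; for even $\beta$ the self-paired term $k'=\beta/2$ is isolated and equals $R_{\beta/2}^n=(2(n+1))^n$, which on its own yields the $\beta=2$ formula.

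The arcsine form then comes from the exact identity $\pi k'/\beta-n\psi_{k'}=\pi(n+1)k'/\beta-n\psi'_{k'}$ with $\psi'_{k'}=\pi k'/\beta+\psi_{k'}$: a direct computation gives $\sin\psi'_{k'}=(n+1)/\sqrt{4n^2/\mu_{k'}+2n+1}$, and $\cos\psi'_{k'}=2n\cos(\pi k'/\beta)/R_{k'}>0$ for $k'<\beta/2$, so $\psi'_{k'}$ is the principal arcsine and the argument matches the statement exactly once squared.

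The hard part will be the prefactor, that is, evaluating $\prod_{k'=1}^{\beta-1}R_{k'}$. I would factor $R_{k'}^2=(2n+1)(c_0+\mu_{k'})$ with $c_0=4n^2/(2n+1)$ and compute $\prod_{k'=1}^{\beta-1}(c_0+\mu_{k'})$ by writing $c_0+2-2\cos(2\pi k'/\beta)=-\omega_{k'}^{-1}(\omega_{k'}-r)(\omega_{k'}-1/r)$ with $\omega_{k'}=e^{2\pi ik'/\beta}$ and $r+1/r=c_0+2$. The key simplification is that the discriminant $(c_0+2)^2-4=16n^2(n+1)^2/(2n+1)^2$ is a perfect square, yielding the clean root $r=2n+1$; then $\prod_{k'}(z-\omega_{k'})=z^\beta-1$ gives $\prod_{k'=0}^{\beta-1}(c_0+2-2\cos(2\pi k'/\beta))=r^\beta+r^{-\beta}-2$, and dividing out the $k'=0$ term produces $\prod_{k'=1}^{\beta-1}R_{k'}^2=((2n+1)^\beta-1)^2/(4n^2)$, so $\prod R_{k'}=((2n+1)^\beta-1)/(2n)$. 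Combining this with the classical value $\prod_{k'=1}^{\beta-1}\sin(\pi k'/\beta)=\beta/2^{\beta-1}$, the factor $(2n)^{n-1}$, and the identity $(1+\tfrac1{2n})^{\beta n}(2n)^{\beta n}=(2n+1)^{\beta n}$ reproduces the stated prefactor after routine algebra. The $(n-1)$-th power graph is handled identically with $2n-1$ in place of $2n+1$: the closed eigenvalue is $(2n-1)-\sin((2n-1)\pi k/(\beta n))/\sin(\pi k/(\beta n))$, one gets $w=a\,[(2n-1)+e^{-2\pi ik'/\beta}]$ with $R_{k'}^2=4n^2-(2n-1)\mu_{k'}$, the analogous root is $r=2n-1$, and the sign $(-1)^\beta$ enters through $\prod_{k'=1}^{\beta-1}R_{k'}^2=((2n-1)^\beta-(-1)^\beta)^2/(4n^2)$, which is exactly what produces the modulus $|(-1)^\beta-(2n-1)^{-\beta}|$ in the final formula.
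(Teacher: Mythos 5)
Your proposal is correct and follows essentially the same route as the paper: matrix tree theorem, Lagrange/Dirichlet closing of the cosine sum, splitting the product by $k$ modulo $\beta$, collapsing the product over $l'$ via roots of unity, pairing $k'$ with $\beta-k'$ to get the squared sines with the arcsine phase, and evaluating the modulus product $\prod_{k'}R_{k'}$. Your variations are only cosmetic: you work with complex exponentials and a M\"obius-type quotient where the paper writes the same quantity as $\lvert z_k\rvert\sin(\omega_k+\pi l/n)$ and uses $\prod_{l=0}^{n-1}\sin(\omega_k+\pi l/n)=\sin(n\omega_k)/2^{n-1}$, and your factorization with root $r=2n+1$ is precisely the paper's identity $\prod_{k=0}^{\beta-1}(2\cosh\theta-2\cos(2\pi k/\beta))=2\cosh(\beta\theta)-2$ with $r=e^{\theta}$, derived from scratch instead of cited.
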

\begin{remark}
We emphasise that in the cycle power graphs $\boldsymbol{C}^{n-1}_{\beta n}$ and $\boldsymbol{C}^n_{\beta n}$ there are $\beta$ copies of $n$-cliques as subgraphs of the original graph. This fact appears in the formula by the factor $n^{\beta n-2}=(n^{n-2})^\beta n^{2(\beta-1)}$ since the number of spanning trees in the complete graph on $n$ vertices is $n^{n-2}$.
\end{remark}
\begin{proof}
We prove the theorem only for the first type of graphs $\boldsymbol{C}^n_{\beta n}$. The proof of the second type $\boldsymbol{C}^{n-1}_{\beta n}$ is very similar to the first one. The matrix tree theorem states that
\begin{equation*}
\tau(\boldsymbol{C}^n_{\beta n})=\frac{1}{\beta n}\prod_{k=1}^{\beta n-1}(2n-2\sum_{m=1}^n\cos(2\pi km/(\beta n))).
\end{equation*}
Lagrange's trigonometric identity expresses the sum of cosines appearing in the above formula in terms of a quotient of sines:
\begin{equation*}
2\sum_{m=1}^n\cos(2\pi km/(\beta n))=\frac{\sin((n+1/2)2\pi k/(\beta n))}{\sin(\pi k/(\beta n))}-1.
\end{equation*}
Hence,
\begin{equation*}
\tau(\boldsymbol{C}^n_{\beta n})=\frac{1}{\beta n}\prod_{k=1}^{\beta n-1}(\sin(\pi k/(\beta n)))^{-1}((2n+1)\sin(\pi k/(\beta n))-\sin(\pi k/(\beta n)+2\pi k/\beta)).
\end{equation*}
Using that there are $\beta n$ spanning trees in the $\beta n$-cycle, that is $\frac{1}{\beta n}\prod_{k=1}^{\beta n-1}(2-2\cos(2\pi k/(\beta n)))=\beta n$, it follows that
\begin{equation}
\label{taucycle}
\prod_{k=1}^{\beta n-1}\sin(\pi k/(\beta n))=\frac{\beta n}{2^{\beta n-1}}.
\end{equation}
For the second factor, as in the proof of Theorem \ref{dicd}, we split the product over $k=1,\ldots,\beta n-1$ into two products, first when $k$ is a multiple of $\beta$, that is $k=l\beta$ with $l=1,\ldots,n-1$, and second when $k$ is not a multiple of $\beta$, that is, $k=k'+l'\beta$ with $k'=1,\ldots,\beta-1$ and $l'=0,1,\ldots,n-1$. The product over the multiples of $\beta$ reduces to
\begin{equation*}
\prod_{l=1}^{n-1}2n\sin(\pi l/n)=n^n.
\end{equation*}
We have
\begin{equation}
\label{2prod}
\tau(\boldsymbol{C}^n_{\beta n})=\frac{2^{\beta n-1}n^n}{(\beta n)^2}\prod_{k=1}^{\beta-1}\prod_{l=0}^{n-1}((2n+1)\sin(\pi k/(\beta n)+\pi l/n)-\sin(\pi k/(\beta n)+\pi l/n+2\pi k/\beta).
\end{equation}
The difference of sines in the above product can be written as
\begin{equation}
\label{sine}
(2n+1)\sin(\pi k/(\beta n)+\pi l/n)-\sin(\pi k/(\beta n)+\pi l/n+2\pi k/\beta)=\lvert z_k\rvert\sin(\pi(n+1)k/(\beta n)+\theta_k+\pi l/n)
\end{equation}
where
\begin{equation*}
z_k=2n\cos(\pi k/\beta)-i(2n+2)\sin(\pi k/\beta)\eqdef\lvert z_k\rvert e^{i\theta_k}.
\end{equation*}
Let $\omega_k=\pi(n+1)k/(\beta n)+\theta_k$, we have
\begin{align}
\prod_{l=0}^{n-1}\sin(\omega_k+\pi l/n)&=\frac{1}{(2i)^n}\prod_{l=0}^{n-1}(e^{i(\omega_k+\pi l/n)}-e^{-i(\omega_k+\pi l/n)})\nonumber\\
&=\frac{1}{(2i)^n}e^{-i\omega_kn}e^{\pi i(n-1)/2}\prod_{l=0}^{n-1}(e^{2i\omega_k}-e^{-2\pi il/n})\nonumber\\
&=\frac{\sin(\omega_kn)}{2^{n-1}}
\label{prodsines}
\end{align}
where in the last equality we used equation (\ref{unityroots}). Putting equations (\ref{2prod}), (\ref{sine}) and (\ref{prodsines}) together yields
\begin{equation*}
\tau(\boldsymbol{C}^n_{\beta n})=\frac{2^{\beta n-1}n^n}{(\beta n)^2}\prod_{k=1}^{\beta-1}\frac{\lvert z_k\rvert^n}{2^{n-1}}\sin(\pi(n+1)k/\beta+n\theta_k).
\end{equation*}
Notice that for even $\beta$, the phase of $z_{\beta/2}$ is $\theta_{\beta/2}=-\pi/2$, so that $\sin(\pi(n+1)/2+n\theta_{\beta/2})=1$. For $\beta=2$, $z_1=-2(n+1)i$, hence
\begin{equation*}
\tau(\boldsymbol{C}^n_{2n})=(2n)^{2n-2}(1+1/n)^n.
\end{equation*}
For $\beta\geqslant3$, we have
\begin{equation*}
\tau(\boldsymbol{C}^n_{\beta n})=\frac{2^{n+\beta-2}n^n}{(\beta n)^2}\big(\prod_{k=1}^{\beta-1}\lvert z_k\rvert^n\big)\prod_{k=1}^{\lceil\beta/2\rceil-1}\sin(\pi(n+1)k/\beta+n\theta_k)\sin(\pi(n+1)(\beta-k)/\beta+n\theta_{\beta-k}).
\end{equation*}
For $1\leqslant k\leqslant\lceil\beta/2\rceil-1$, the phase of $z_k$ is $\theta_k=-\Arcsin((2n+2)\sin(\pi k/\beta)/\lvert z_k\rvert)$. The phase of $z_{\beta-k}$ satisfies
\begin{equation*}
\cos\theta_{\beta-k}=-\cos\theta_k,\quad\sin\theta_{\beta-k}=\sin\theta_k
\end{equation*}
so that, $\theta_{\beta-k}=\pi-\theta_k$. The modulus of $z_k$ is given by
\begin{equation*}
\lvert z_k\rvert=((2n+1)^2+1-2(2n+1)\cos(2\pi k/\beta))^{1/2}=(4n^2+(2n+1)\mu_k)^{1/2}
\end{equation*}
where $\mu_k=2-2\cos(2\pi k/\beta)$, $k=1,\ldots,\beta-1$, are the non-zero eigenvalues of the Laplacian on the $\beta$-cycle. We have $\sin(\pi k/\beta)=\mu_k^{1/2}/2$. Hence for $1\leqslant k\leqslant\lceil\beta/2\rceil-1$, the phase is given by $\theta_k=-\Arcsin((n+1)/\sqrt{4n^2/\mu_k+2n+1})$. Therefore
\begin{equation}
\label{taucirc}
\tau(\boldsymbol{C}^n_{\beta n})=\frac{2^{n+\beta-2}n^n}{(\beta n)^2}\big(\prod_{k=1}^{\beta-1}\lvert z_k\rvert^n\big)\prod_{k=1}^{\lceil\beta/2\rceil-1}\sin^2\left(\frac{\pi(n+1)k}{\beta}-n\Arcsin\left(\frac{(n+1)}{\sqrt{4n^2/\mu_k+2n+1}}\right)\right).
\end{equation}
The product of the modulus of $z_k$ is given by
\begin{align}
\prod_{k=1}^{\beta-1}\lvert z_k\rvert&=\frac{(2n+1)^{\beta/2}}{2n}\prod_{k=0}^{\beta-1}(2n+1+1/(2n+1)-2\cos(2\pi k/\beta))^{1/2}\nonumber\\
&=\frac{(2n+1)^{\beta/2}}{2n}(2\cosh(\beta\argcosh(n+1/2+1/(4n+2)))-2)^{1/2}\nonumber\\
&=\frac{(2n+1)^{\beta}}{2n}(1-(2n+1)^{-\beta})
\label{prod_modulus}
\end{align}
where the second equality comes from the identity (see \cite[section $2$]{louis2015formula})
\begin{equation*}
\prod_{k=0}^{\beta-1}(2\cosh\theta-2\cos(2\pi k/n))=2\cosh(\beta\theta)-2.
\end{equation*}
Putting equality (\ref{prod_modulus}) into (\ref{taucirc}) gives the theorem.
\end{proof}
\begin{remark}
We point out that the proof above could not be easily applied to other powers of the $\beta n$-cycle, like $\boldsymbol{C}^{n-p}_{\beta n}$, where $p\geqslant2$ or $p\leqslant-1$, because in this case $z_k$ defined in equation (\ref{sine}) would also depend on $l$ and the phase $\theta_k$ of $z_k$ cannot be easily determined. As a consequence, the product over $l$ cannot be evaluated in the same way as it is done in the proof. It would be interesting to find a derivation in this class of more general circulant graphs.
\end{remark}
From Theorem \ref{circ}, we derive the asymptotic behaviour of the number of spanning trees in the $n$-th, respectively $(n-1)$-th, power graph of the $\beta n$-cycle as $n\rightarrow\infty$.
\begin{corollary}
Let $\beta\in\mathbb{N}_{\geqslant2}$. The asymptotic number of spanning trees in the $n$-th and $(n-1)$-th power graphs of the $\beta n$-cycle $\boldsymbol{C}^n_{\beta n}$ and $\boldsymbol{C}^{n-1}_{\beta n}$ as $n\rightarrow\infty$ is respectively given by
\begin{equation*}
\tau(\boldsymbol{C}^n_{\beta n})=\frac{2^{\beta n}}{2\beta}n^{\beta n-2}(e^{\beta/2}+o(1))
\end{equation*}
and
\begin{equation*}
\tau(\boldsymbol{C}^{n-1}_{\beta n})=\frac{2^{\beta n}}{2\beta}n^{\beta n-2}(e^{-\beta/2}+o(1)).
\end{equation*}
\end{corollary}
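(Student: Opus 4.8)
The plan is to insert the closed formula of Theorem~\ref{circ} and pass to the limit factor by factor. I treat $\boldsymbol{C}^n_{\beta n}$ with $\beta\geqslant3$ in detail; the power graph $\boldsymbol{C}^{n-1}_{\beta n}$ is handled in the same way, and $\beta=2$ is immediate. Writing the prefactor as
\[
\frac{2^{\beta(n+1)}}{(2\beta)^2}n^{\beta n-2}=\frac{2^{\beta n}}{2\beta}\,n^{\beta n-2}\cdot\frac{2^\beta}{2\beta},
\]
the formula becomes $\frac{2^{\beta n}}{2\beta}n^{\beta n-2}$ times a bracket, so it suffices to show that this bracket converges to $e^{\beta/2}$.

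Two of the factors in the bracket are elementary: $(1+\frac1{2n})^{\beta n}\to e^{\beta/2}$, which produces the exponential in the statement, while $(1-(2n+1)^{-\beta})^n\to1$ because $n(2n+1)^{-\beta}\to0$ for $\beta\geqslant2$. It therefore remains to prove that $\frac{2^\beta}{2\beta}$ times the sine product tends to $1$, that is, that $\prod_{k=1}^{\lceil\beta/2\rceil-1}\sin^2\omega_{k,n}\to\frac{2\beta}{2^\beta}$, where $\omega_{k,n}=\frac{\pi(n+1)k}{\beta}-n\Arcsin\big(\frac{n+1}{\sqrt{4n^2/\mu_k+2n+1}}\big)$.

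I would obtain this by first computing $\lim_n\omega_{k,n}$ and then invoking a classical product identity. Since $\mu_k=4\sin^2(\pi k/\beta)$, the argument of the $\Arcsin$ converges to $\sin(\pi k/\beta)$, so $\Arcsin(\cdots)\to\pi k/\beta$ and the leading contribution $n\cdot\frac{\pi k}{\beta}$ cancels against the term $\frac{\pi nk}{\beta}$ coming from $\frac{\pi(n+1)k}{\beta}$. Granting that $\omega_{k,n}\to\frac{\pi k}{\beta}$, continuity gives $\prod_k\sin^2\omega_{k,n}\to\prod_{k=1}^{\lceil\beta/2\rceil-1}\sin^2(\pi k/\beta)$, and the identity $\prod_{k=1}^{\beta-1}\sin(\pi k/\beta)=\beta/2^{\beta-1}$, combined with the symmetry $\sin(\pi(\beta-k)/\beta)=\sin(\pi k/\beta)$ and the value $1$ of the middle factor when $\beta$ is even, yields exactly $\frac{2\beta}{2^\beta}$. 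Collecting everything gives the bracket limit $e^{\beta/2}$. For $\beta=2$ the sine product is empty and $(1+1/n)^n\to e=e^{\beta/2}$, so the claim is direct.

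The hard part will be justifying rigorously that $\omega_{k,n}\to\frac{\pi k}{\beta}$. Because the $\Arcsin$ is multiplied by $n$, its deviation from $\pi k/\beta$ matters at order $1/n$, so the crude substitution $\Arcsin(\cdots)\approx\pi k/\beta$ is not legitimate: one must expand $\frac{n+1}{\sqrt{4n^2/\mu_k+2n+1}}$ and then the $\Arcsin$ to relative order $1/n$, and check what the resulting $O(1/n)$ term contributes once multiplied by $n$. Establishing this expansion, uniformly for $1\leqslant k\leqslant\lceil\beta/2\rceil-1$, is the technical heart of the argument; the elementary limits and the sine product identity are then routine. For $\boldsymbol{C}^{n-1}_{\beta n}$ the same scheme applies, with $(1-\frac1{2n})^{\beta n}\to e^{-\beta/2}$ supplying the exponential and $\lvert(-1)^\beta-(2n-1)^{-\beta}\rvert^n\to1$, while the sine product again cancels the algebraic prefactor.
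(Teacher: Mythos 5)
Your reduction matches the paper's own proof exactly: factor out $\frac{2^{\beta n}}{2\beta}n^{\beta n-2}$, use $(1+\frac{1}{2n})^{\beta n}\to e^{\beta/2}$ and $(1-(2n+1)^{-\beta})^n\to1$, and reduce everything to the claim that
\begin{equation*}
\prod_{k=1}^{\lceil\beta/2\rceil-1}\sin^2\omega_{k,n}\longrightarrow\frac{2\beta}{2^{\beta}},\qquad
\omega_{k,n}=\frac{\pi(n+1)k}{\beta}-n\Arcsin\Big(\frac{n+1}{\sqrt{4n^2/\mu_k+2n+1}}\Big).
\end{equation*}
You are right that the crude substitution $\Arcsin(\cdots)\approx\pi k/\beta$ is illegitimate because the error is multiplied by $n$, but your proposal stops at ``granting that $\omega_{k,n}\to\pi k/\beta$'', and that deferred step is a genuine gap which cannot be filled: carrying out the very expansion you call for shows the granted claim is false. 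Writing $s=\sin(\pi k/\beta)$, so that $\mu_k=4s^2$ and $\pi k/\beta\in(0,\pi/2)$ for the relevant $k$, one gets
\begin{equation*}
\frac{n+1}{\sqrt{4n^2/\mu_k+2n+1}}=\frac{(n+1)s}{\sqrt{n^2+(2n+1)s^2}}=s+\frac{s(1-s^2)}{n}+O(n^{-2}),
\end{equation*}
hence, since $\Arcsin(s+\epsilon)=\frac{\pi k}{\beta}+\frac{\epsilon}{\sqrt{1-s^2}}+O(\epsilon^2)$,
\begin{equation*}
n\Arcsin\Big(\frac{n+1}{\sqrt{4n^2/\mu_k+2n+1}}\Big)=\frac{\pi nk}{\beta}+s\sqrt{1-s^2}+O(n^{-1})=\frac{\pi nk}{\beta}+\frac{1}{2}\sin\frac{2\pi k}{\beta}+O(n^{-1}),
\end{equation*}
so that
\begin{equation*}
\omega_{k,n}\longrightarrow\frac{\pi k}{\beta}-\frac{1}{2}\sin\frac{2\pi k}{\beta}\neq\frac{\pi k}{\beta}:
\end{equation*}
the $O(1/n)$ deviation of the $\Arcsin$ survives multiplication by $n$ as a finite phase shift, exactly the danger you flagged.

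Consequently the sine product converges to $\prod_{k=1}^{\lceil\beta/2\rceil-1}\sin^2\big(\frac{\pi k}{\beta}-\frac{1}{2}\sin\frac{2\pi k}{\beta}\big)$, which for $\beta\geqslant3$ is not $2\beta/2^{\beta}$, so your scheme cannot close --- and neither does the paper's own proof, which performs precisely the substitution you warned against (it replaces the $\Arcsin$ argument by its limit $\sin(\pi k/\beta)$ and invokes (\ref{taucycle})). In fact the Corollary as stated is false for $\beta\geqslant3$, while Theorem \ref{circ} itself is correct: for $\beta=3$ the theorem (numerically confirmed, e.g.\ it reproduces $\tau(\boldsymbol{C}^2_6)=384$ and $\tau(\boldsymbol{C}^3_9)=412164$) gives
\begin{equation*}
\frac{\tau(\boldsymbol{C}^n_{3n})}{2^{3n}n^{3n-2}}\longrightarrow\frac{2^{3}}{36}\,e^{3/2}\sin^2\Big(\frac{\pi}{3}-\frac{\sqrt{3}}{4}\Big)\approx0.33,
\end{equation*}
whereas the Corollary claims the limit $e^{3/2}/6\approx0.75$; the exact ratios for $n=2,3,4$ are $0.375$, $0.368$, $0.362$, consistent with the former and not the latter. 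The same shift occurs for $\boldsymbol{C}^{n-1}_{\beta n}$; only $\beta=2$, where the sine product is empty, is correct. So what you called the ``technical heart'' is exactly where both your proposal and the published proof break down, and filling it refutes rather than completes the statement: the true asymptotic constant carries the extra factor $\frac{2^{\beta}}{2\beta}\prod_{k=1}^{\lceil\beta/2\rceil-1}\sin^2\big(\frac{\pi k}{\beta}-\frac{1}{2}\sin\frac{2\pi k}{\beta}\big)$ relative to the claimed one.
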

\begin{proof}
By observing that for all $k\in\{1,\ldots,\lceil\beta/2\rceil-1\}$,
\begin{equation*}
\lim_{n\rightarrow\infty}\frac{n+1}{\sqrt{4n^2/\mu_k+2n+1}}=\sin(\pi k/\beta)\quad\textnormal{and}\quad\lim_{n\rightarrow\infty}\frac{n-1}{\sqrt{4n^2/\mu_k-(2n-1)}}=\sin(\pi k/\beta)
\end{equation*}
where $\mu_k=2-2\cos(2\pi k/\beta)$ and using relation (\ref{taucycle}) the corollary is a direct consequence of Theorem \ref{circ}.
\end{proof}

\nocite{*}
\bibliographystyle{plain}
\bibliography{bibliographyDic}

\end{document}